\documentclass[a4paper,10pt]{article}
\usepackage{amsmath,amsthm,amssymb}

\setlength{\oddsidemargin}{0pt}
\setlength{\topmargin}{5pt}
\setlength{\textheight}{655pt}
\setlength{\textwidth}{475pt}
\setlength{\headsep}{11pt}
\setlength{\parindent}{0pt}
\setlength{\parskip}{1ex plus 0.5ex minus 0.2ex}

\numberwithin{equation}{section}




\newcommand{\ot}{\otimes}
\newcommand{\id}{\text{id}}

\newcommand{\R}{\mathbb{R}}

\newcommand{\C}{\mathbb{C}}







\newcommand{\G}{\mathbb{G}}
\newcommand{\irr}{{\rm Irr}}

\newcommand{\HNN}{{\rm HNN}}

\theoremstyle{plain}
\newtheorem{theorem}{Theorem}[section]

\newtheorem{lemma}[theorem]{Lemma}
\newtheorem{proposition}[theorem]{Proposition}

\theoremstyle{definition}
\newtheorem{definition}[theorem]{Definition}
\newtheorem{example}[theorem]{Example}

\newtheorem{remark}[theorem]{Remark}

\begin{document}
\begin{center}
{\LARGE\bf K-amenability of HNN extensions of amenable discrete quantum groups}

\bigskip

{\sc Pierre Fima$^{(1,2)}$
\setcounter{footnote}{1}\footnotetext{Partially supported by the \textit{Agence Nationale de la Recherche} (Grant ANR 2011 BS01 008 01).}
\setcounter{footnote}{2}\footnotetext{Universit\'e Denis-Diderot Paris 7, IMJ, Chevaleret.
    \\ E-mail: pfima@math.jussieu.fr}}

\end{center}

\begin{abstract}
\noindent We construct the HNN extension of discrete quantum groups, we study their representation theory and we show that an HNN extension of amenable discrete quantum groups is $K$-amenable.\end{abstract}

\section{Introduction}
The notion of $K$-amenability for discrete groups was introduced by Cuntz \cite{Cu83} in order to give a simpler proof of a result of Pimsner and Voiculescu \cite{PV82} calculating the $K$-theory of the reduced $C^*$-algebra of a free group. Cuntz proved that the free product of $K$-amenable discrete groups is $K$-amenable. Julg and Valette \cite{JV84} extended the notion of $K$-amenability to the locally compact case and proved the $K$-amenability of locally compact groups acting on trees with amenable stabilizers. By Bass-Serre theory \cite{Se83}, this includes the case of amalgamated free products and HNN extensions of amenable discrete groups. Then, Pimsner \cite{Pi86} proved the $K$-amenability of locally compact groups acting on trees with $K$-amenable stabilizers.

At the quantum side, Skandalis \cite{Sk88} defined a notion of $K$-theoretic nuclearity for $C^*$-algebras analogous to Cuntz's $K$-theoritic amenability and Germain \cite{Ge96} proved that the free product of unital separable $K$-nuclear $C^*$-algebras in $K$-nuclear.

In the 1980's, Woronowicz \cite{Wo87}, \cite{Wo88}, \cite{Wo95} introduced the notion of compact quantum groups and generalized the classical Peter-Weyl representation theory. In this paper we consider discrete quantum groups as dual of compact quantum groups. Wang \cite{Wa95} introduced the amalgamated free product construction in the setting of Woronowicz and the free orthogonal and unitary quantum groups. The construction of free orthogonal and unitary quantum groups was generalized by Van Daele and Wang \cite{VW96}.

Baaj and Skandalis developed \cite{BS89} the equivariant $KK$-theory with respect to coactions of Hopf $C^*$-algebras and the general theory of locally compact quantum groups was done by Kustermans and Vaes \cite{KV00}. Vergnioux \cite{Ve04} developed the equivariant $KK$-theory for locally compact quantum groups and proved the $K$-amenability of amalgamated free products of discrete amenable quantum groups. Voigt \cite{Vo11} proved the $K$-amenability of free orthogonal quantum groups and Vergnioux and Voigt \cite{VV11} proved the $K$-amenability of free products of free orthogonal and unitary quantum groups.

The goal of this paper is to prove $K$-amenability of HNN extensions of discrete amenable quantum groups. The HNN construction of a given group $H$ is a group $\Gamma$ in which $H$ embeds in such a way that two given isomorphic subgroups of $H$ are conjugate. More precisely, given a subgroup $\Sigma< H$ and an injective homomorphism $\theta\,:\,\Sigma\rightarrow H$, the HNN extension is defined by $\Gamma=\langle H,t\,:\,\theta(\sigma)=t\sigma t^{-1}\,\,\forall\sigma\in\Sigma\rangle$. The name HNN is given in honor to G. Higman, B. H. Neuman and H. Neumann who were the first authors to consider this construction in \cite{HNN49}. This construction was developed by Ueda \cite{Ue05} in the setting of von Neumann algebras and $C^*$-algebras. Another approach was given by the author and S. Vaes \cite{FV12} in the setting of tracial von Neumann algebras. In this paper, we follow this approach to construct the HNN extension of discrete quantum groups, we study its representation theory and we prove the $K$-amenability in the case where the given starting quantum group is amenable.

This paper is organized as follows. The section $2$ is a preliminary section in which we fix some notations and recall some basic definitions and results about quantum groups and $K$-amenability. In section $3$ we give a detailed description of HNN extensions of $C^*$-algebras. In section $5$ we construct HNN extensions of discrete quantum groups and study their representation theory. Finally, we proved the $K$-amenability of HNN extensions of discrete amenable quantum groups in section $5$.

\section{Preliminaries}

All $C^*$-algebras are supposed to be separable and unital and all Hilbert $C^*$-modules are supposed to be separable. Let $A$ be a $C^*$-algebra and $H$ a Hilbert $A$-module. The $A$-valued scalar product is denoted by $\langle .,.\rangle$ and is supposed to be linear in the second variable. The $C^*$-algebra of adjointable maps on $H$ is denoted by $\mathcal{L}_A(H)$. We will use the same symbol $\ot$ to denote the tensor product of Hilbert $C^*$-modules and the minimal tensor product of $C^{*}$-algebras. We use the symbol $\odot$ to denote the algebraic tensor product of vector spaces. We will use freely the leg numbering notation.

\begin{definition}[Woronowicz]
A \textit{compact quantum group is a pair} $\G=(A,\Delta)$, where $A$ is a unital $C^{*}$-algebra, $\Delta$ is unital *-homomorphism from $A$ to $A\ot A$ satisfying $(\Delta\ot\id)\Delta=(\id\ot\Delta)\Delta$ and $\Delta(A)(A\ot 1)$ and $\Delta(A)(1\ot A)$ are dense in $A\ot A$.
\end{definition}
We denote by $C(\G)$ the $C^{*}$-algebra $A$. The major results in the general theory of compact quantum groups are the existence and uniqueness of the Haar state and the Peter-Weyl representation theory.
\begin{theorem}[Woronowicz]
Let $\G$ be a compact quantum group. There exists a unique state $\varphi$ on $C(\G)$ such that $(\id\ot\varphi)\Delta(a)=\varphi(a)1=(\varphi\ot\id)\Delta(a)$ for all $a\in C(\G)$. The state $\varphi$ is called the {\rm Haar state} of $\G$.
\end{theorem}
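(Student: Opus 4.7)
The plan is to handle uniqueness first, since it is a short consequence of the two invariance identities, and then to construct the Haar state via a Cesàro averaging procedure on convolution powers.

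For uniqueness, suppose $\varphi_1$ and $\varphi_2$ are both Haar states. I would evaluate the iterated slice $(\varphi_1\ot\varphi_2)\Delta(a)$ in two ways. Using the right invariance of $\varphi_1$, i.e. $(\varphi_1\ot\id)\Delta(a)=\varphi_1(a)1$, the expression collapses to $\varphi_2(\varphi_1(a)1)=\varphi_1(a)$. Using the left invariance of $\varphi_2$, i.e. $(\id\ot\varphi_2)\Delta(a)=\varphi_2(a)1$, the same expression collapses to $\varphi_1(\varphi_2(a)1)=\varphi_2(a)$. Hence $\varphi_1=\varphi_2$.

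For existence, I would introduce the convolution of states defined by $(\omega_1\star\omega_2)(a)=(\omega_1\ot\omega_2)\Delta(a)$, which is again a state by coassociativity, together with the associated ucp map $E_\omega\,:\,a\mapsto(\omega\ot\id)\Delta(a)$. Fix any state $\omega$ and form the Cesàro averages $\omega_n=\frac{1}{n}\sum_{k=1}^n\omega^{\star k}$. By weak-$*$ compactness of the state space extract a cluster point $\varphi$. A direct telescoping computation shows that $\omega_n\star\omega-\omega_n$ has norm at most $2/n$, so in the limit $\varphi\star\omega=\varphi$ and symmetrically $\omega\star\varphi=\varphi$. The goal is then to upgrade this absorption property from a single $\omega$ to every state, which would yield the desired bi-invariance.

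The heart of the matter, and the main obstacle, is precisely this upgrade. Here the density hypotheses $\Delta(A)(A\ot 1)$ and $\Delta(A)(1\ot A)$ dense in $A\ot A$ are essential: they imply a cancellation law which, combined with the absorption $\varphi\star\omega=\varphi$, forces $\varphi$ to be independent of the initial state $\omega$. Concretely, I would argue that for elements of the form $\Delta(a)(1\ot b)$ one can interchange which tensor leg is acted upon by $\varphi$, because the density lets one approximate arbitrary $x\ot y$ in a way that is compatible with the slice by $\varphi$. A Schwarz-type inequality then upgrades the identity $\varphi\star\omega=\varphi$ valid for one positive $\omega$ to $\varphi\star\psi=\varphi$ for every state $\psi$, and symmetrically on the other side. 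This yields $(\id\ot\varphi)\Delta(a)=\varphi(a)1=(\varphi\ot\id)\Delta(a)$, as required. The delicate point throughout is that one only has the topological density condition, not an explicit antipode or counit on the full $C^*$-algebra, so every manipulation must be performed at the level of slice maps and approximations.
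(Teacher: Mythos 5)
Your uniqueness argument is complete and correct (evaluating $(\varphi_1\ot\varphi_2)\Delta$ in two ways), and the Ces\`aro step is also sound: the telescoping estimate $\|\omega_n\star\omega-\omega_n\|\le 2/n$ together with weak-$*$ compactness does produce a state $\varphi$ with $\varphi\star\omega=\omega\star\varphi=\varphi$. Note that the paper itself gives no proof of this statement --- it is quoted as background and attributed to Woronowicz --- so the relevant comparison is with the standard Woronowicz/Van Daele argument, whose overall shape you follow.

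However, the step you yourself call the heart of the matter is a genuine gap, and the mechanism you propose for it does not close as stated. Concretely, suppose $h\star\omega=h$ and set $c=(h\ot\id)\Delta(a)-h(a)1$; what must be shown is $\omega(c^*c)=0$. Since $\omega\bigl((h\ot\id)\Delta(a)\bigr)=(h\star\omega)(a)=h(a)$, one gets $\omega(c^*c)=\omega(x^*x)-|h(a)|^2$ with $x=(h\ot\id)\Delta(a)$, and the ``Schwarz-type inequality'' you invoke (Kadison--Schwarz for the unital completely positive map $(h\ot\id)\Delta$) only yields $\omega(x^*x)\le h(a^*a)$, hence $\omega(c^*c)\le h(a^*a)-|h(a)|^2$, which is not zero in general. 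The actual argument (Van Daele) applies the invariance to elements of the form $\Delta(b)(1\ot a)$ --- this is precisely where the density of $\Delta(A)(1\ot A)$ in $A\ot A$ enters --- to force the cancellation $\omega(c^*c)=0$; this computation is the real content and is absent from your sketch. Moreover, even granting that lemma, it yields invariance of $h$ only under positive functionals dominated by a multiple of $\omega$, not under every state. To finish one needs either the finite-intersection argument (the sets $K_\omega$ of $\omega$-bi-invariant states are weak-$*$ compact and convex, and $K_{(\omega+\psi)/2}\subset K_\omega\cap K_\psi$, so their total intersection is nonempty), or, in the separable setting assumed in this paper, one must run the Ces\`aro construction with a \emph{faithful} state $\omega$, so that $\omega(c^*c)=0$ forces $c=0$ outright. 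Your proposal mentions neither mechanism, so as written it stops exactly where the work begins.
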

The Haar state need not be faithful. When the Haar state is faithful we say that $\G$ is \textit{reduced}. Let $C_{\text{red}}(\G):=C(\G) / I$ be the reduced $C^*$-algebra of $\G$, where $I=\{x\in A\,|\,\varphi(x^{*}x)=0\}$.  $C_{\text{red}}(\G)$ has a canonical structure of compact quantum group, called the \textit{reduced compact quantum group of} $\G$.

A unitary representation of dimension $n$ of $\G$ is a unitary $u\in M_n(\C)\ot C(\G)$ such that $(\id\ot\Delta)(u)=u_{12}u_{13}$. If $u\in M_n(\C)\ot C(\G)$ and $v\in M_k(\C)\ot C(\G)$ we define their \textit{tensor product} by
$$u\ot v=u_{13}v_{23}\in M_n(\C)\ot M_k(\C)\ot C(\G).$$ An \textit{intertwiner} between $u$ and $v$ is a linear map $T\,:\,\C^n\rightarrow \C^k$ such that $(T\ot 1)u=v(T\ot 1)$. The unitary representations $u$ and $v$ are called \textit{unitarily equivalent} if there exists a unitary intertwiner between $u$ and $v$. We call $u$ \textit{irreducible} if the only intertwiners between $u$ and $u$ are the scalar multiples of the identity.

\begin{theorem}[Woronowicz]
Every unitary representation is unitarily equivalent to a direct sum of irreducible unitary representations.
\end{theorem}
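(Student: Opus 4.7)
To cover the possibly infinite-dimensional case, by a unitary representation of $\G$ on a separable Hilbert space $H$ I mean a unitary $u$ in the multiplier algebra $M(\mathcal{K}(H)\ot C(\G))$ satisfying $(\id\ot\Delta)(u)=u_{12}u_{13}$; the definition given in the excerpt is the special case $H=\C^n$. My plan is: (i) exhibit, for any such $u$, a nonzero finite-dimensional $u$-invariant subspace $V\subset H$, on which the restriction $u|_V$ then breaks into irreducibles via a standard minimal-projection argument in its finite-dimensional endomorphism algebra; (ii) assemble such finite-dimensional pieces by a Zorn argument to decompose all of $H$.

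\textbf{Key intertwiner via Haar averaging.} For any trace-class $T\in\mathcal{B}(H)$, define the Haar average
$$\widetilde{T}:=(\id\ot\varphi)\bigl(u(T\ot 1)u^*\bigr)\in\mathcal{B}(H).$$
Using $(\id\ot\Delta)(u)=u_{12}u_{13}$ together with its adjoint and the right invariance $(\id\ot\varphi)\Delta(a)=\varphi(a)1$ of the Haar state, a direct computation in $\mathcal{B}(H)\ot C(\G)\ot C(\G)$ yields the intertwining identity
$$(\widetilde{T}\ot 1)u=u(\widetilde{T}\ot 1).$$
Exploiting the trace property together with $u^{*}u=1$,
$$\mathrm{tr}(\widetilde{T})=(\mathrm{tr}\ot\varphi)\bigl(u(T\ot 1)u^{*}\bigr)=(\mathrm{tr}\ot\varphi)\bigl((T\ot 1)u^{*}u\bigr)=\mathrm{tr}(T),$$
so $\widetilde{T}$ is itself trace class, hence compact, and is positive whenever $T$ is.

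\textbf{Producing a finite-dimensional irreducible subrepresentation.} Given a unit vector $\xi\in H$, apply the construction to the rank-one projection $T=|\xi\rangle\langle\xi|$: the resulting $\widetilde{T}$ is positive, compact, and nonzero since $\mathrm{tr}(\widetilde{T})=1$. The spectral theorem for compact positive operators furnishes a nonzero eigenvalue $\lambda>0$ whose eigenspace $V:=\ker(\widetilde{T}-\lambda\,\id)$ is finite-dimensional, and the intertwining identity forces $V$ to be $u$-invariant, so $v:=u|_{V}\in\mathcal{B}(V)\ot C(\G)$ is a finite-dimensional unitary representation. The algebra $\mathrm{End}(v)\subset\mathcal{B}(V)$ is a finite-dimensional $C^{*}$-algebra, hence contains a minimal projection $p$, and $pV$ then carries an irreducible subrepresentation; iterating inside $V$ produces a finite orthogonal decomposition of $v$ into irreducibles.

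\textbf{Conclusion via Zorn.} Order by inclusion the collection of families $(V_{i})_{i\in I}$ of pairwise orthogonal finite-dimensional $u$-invariant subspaces of $H$ on each of which $u$ restricts to an irreducible representation. This collection is inductive and hence has a maximal element $(V_{i})_{i\in I_{0}}$; set $K:=\overline{\bigoplus_{i\in I_{0}}V_{i}}$. If $K\neq H$, then $K^{\perp}$ is a nonzero $u$-invariant closed subspace, and applying the preceding paragraph to the restriction $u|_{K^{\perp}}$ yields a nonzero finite-dimensional irreducible subrepresentation in $K^{\perp}$, contradicting maximality. Therefore $H=\bigoplus_{i\in I_{0}}V_{i}$ as unitary representations of $\G$. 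The main technical step, and the only place the possible infinite dimension of $H$ is genuinely felt, is the compactness of $\widetilde{|\xi\rangle\langle\xi|}$; this is exactly what the trace identity $\mathrm{tr}(\widetilde{T})=\mathrm{tr}(T)$ delivers.
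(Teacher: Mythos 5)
The paper does not actually prove this statement --- it is recalled in the preliminaries as a classical theorem of Woronowicz --- so your proposal can only be judged on its own merits. Its architecture is indeed the standard one (Haar-averaging a rank-one operator into a self-intertwiner $\widetilde{T}$, spectral theory of a compact positive operator to extract a finite-dimensional invariant subspace, finite-dimensional complete reducibility, then a Zorn exhaustion), and the intertwining identity $(\widetilde{T}\ot 1)u=u(\widetilde{T}\ot 1)$ does follow, exactly as you say, from invariance of $\varphi$. The genuine gap is the step you yourself single out as the main technical point: the identity $\mathrm{tr}(\widetilde{T})=\mathrm{tr}(T)$ is \emph{false} for general compact quantum groups. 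Your derivation moves $u^{*}$ cyclically around $u(T\ot 1)$ under the functional $\mathrm{tr}\ot\varphi$; cyclic permutation under $\mathrm{tr}\ot\varphi$ is legitimate only if $\varphi$ is a trace, and the Haar state is not tracial unless $\G$ is of Kac type. Concretely, if $u$ is the fundamental representation of $SU_q(2)$ with $0<q<1$, then by Schur's lemma and Woronowicz's orthogonality relations $(\id\ot\varphi)\bigl(u(T\ot 1)u^{*}\bigr)=\frac{\mathrm{Tr}(\rho T)}{\mathrm{Tr}(\rho)}1$ where $\rho=\mathrm{diag}(q,q^{-1})$ (or its inverse, depending on conventions); taking $T=\mathrm{diag}(1,0)$ gives $\mathrm{tr}(\widetilde{T})=\frac{2q^{\pm 1}}{q+q^{-1}}\neq 1=\mathrm{tr}(T)$. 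So neither the compactness nor the nonvanishing of $\widetilde{T}$ is established by your argument, and as written the proof collapses precisely at the point where, as you note, the infinite-dimensionality of $H$ is felt.

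Both facts are nevertheless true, and can be proved by more robust means, so your proof is repairable without changing its structure. For compactness, avoid traces entirely: since $C(\G)$ is unital, $T\ot 1\in\mathcal{K}(H)\ot C(\G)$ for any compact $T$; since $u$ is a multiplier of $\mathcal{K}(H)\ot C(\G)$, also $u(T\ot 1)u^{*}\in\mathcal{K}(H)\ot C(\G)$; and the slice map $\id\ot\varphi$ carries $\mathcal{K}(H)\ot C(\G)$ into $\mathcal{K}(H)$, since it does so on elementary tensors and is bounded. For nonvanishing, use that $\varphi$ factors as $\varphi=\varphi_{\mathrm{red}}\circ\pi$ through the reduced quantum group with $\varphi_{\mathrm{red}}$ faithful: the element $(\id\ot\pi)\bigl(u(T\ot 1)u^{*}\bigr)$ is positive and nonzero (a unitary conjugate of $T\ot 1\neq 0$), and a slice of a nonzero positive element of a minimal tensor product by a faithful state is nonzero (pick a state $\psi$ on $\mathcal{K}(H)$ with $(\psi\ot\id)$ of it nonzero and apply $\varphi_{\mathrm{red}}$). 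With $\widetilde{T}$ compact, positive and nonzero in hand, the remainder of your argument --- invariance of the eigenspace $\ker(\widetilde{T}-\lambda)$, reduction of a finite-dimensional unitary representation via minimal projections of its (automatically self-adjoint) endomorphism algebra, and the Zorn argument --- goes through as written.
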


We denote by $\irr(\G)$ the set of (equivalence classes) of irreducible unitary representations of a compact quantum group $\G$. For each $x\in\irr(\G)$ we choose a representative $u^{x}\in M_{n_x}\ot C(\G)$. The class of the trivial representation is denoted by $1$.

We denote by $\mathcal{C}(\G)$ the linear span of the coefficients of the $u^x$ for $x\in\irr(\G)$. It is a unital dense *-subalgebra of $C(\G)$. Let $C_{\text{max}}(\G)$ be the maximal $C^{*}$-completion of the unital *-algebra $\mathcal{C}(\G)$.  $C_{\text{max}}(\G)$ has a canonical structure of a compact quantum group called the \textit{maximal quantum group} of $\G$. Observe that we have a canonical surjective morphism $\lambda\,:\,C_{\text{max}}(\G)\rightarrow C_{\text{red}}(\G)$ which is the identity on $\mathcal{C}(\G)$. $\G$ is called \textit{amenable} if $\lambda$ is an isomorphism. $\G$ is called $K$\textit{-amenable} if there exists $\alpha\in {\rm KK}(C_{\text{red}}(\G),\C)$ such that $\lambda^*(\alpha)=[\epsilon]\in{\rm KK}(C_{\text{max}}(\G),\C)$ where $\epsilon\,:\,C_{\text{max}}(\G)\rightarrow\C$ is the trivial representation i.e., $(\id\ot\epsilon)(u^x)=1$ for all $x\in\irr(\G)$.

\section{{\rm HNN} extensions of $C^*$-algebras.}\label{HNNCstar}

\textbf{The reduced HNN extension}
 
The reduced HNN extension was introduced in \cite{Ue05}. Here, we follow the approach of \cite{FV12}.

Let $B\subset A$ be a unital $C^*$-subalgebra of the unital $C^*$-algebra $A$ and $\theta\,:\,A\rightarrow B$ be an injective $*$-homomorphism. Define, for $\epsilon\in\{-1,1\}$, 
$$B_{\epsilon}=\left\{\begin{array}{lcl}
B&\text{if}&\epsilon=1,\\
\theta(B)&\text{if}&\epsilon=-1.
\end{array}\right.$$
We define $\theta^{\epsilon}\,:B_{\epsilon}\rightarrow B_{-\epsilon}\subset A$ in the obvious way.

We suppose that there exist conditional expectations $E_\epsilon\,:A\rightarrow B_{\epsilon}$ for $\epsilon\in\{-1,1\}$. For $\epsilon=1$, we denote by $(H_{1},\pi_{1},\eta_{1})$ the G.N.S. construction associated to $E_{1}$ i.e. $H_{1}$ is the Hilbert $B$-module obtained by separation and completion of $A$ for the $B$-valued scalar product $\langle x,y\rangle= E_{1}(x^*y)$, $x,y\in A$, the right action of $B$ is given by right multiplication, $\pi_{1}$ is the representation of $A$ on $H_{1}$ given by left multiplication and $\eta_{1}$ is the image of $1$ in $H_{1}$. For $\epsilon=-1$, we denote by $(H_{-1},\pi_{-1},\eta_{-1})$ the ``G.N.S. construction'' associated to $\theta^{-1}\circ E_{-1}$ i.e. $H_{-1}$ is the Hilbert $B$-module obtained by separation and completion of $A$ for the $B$-valued scalar product $\langle x,y\rangle= \theta^{-1}\circ E_{-1}(x^*y)$, $x,y\in A$, the right action of $b\in B$ is given by the right multiplication by $\theta(b)$, $\pi_{-1}$ is the representation of $A$ on $H_{-1}$ given by left multiplication and $\eta_{-1}$ is the image of $1$ in $H_{-1}$.

Observe that, for $\epsilon\in\{-1,1\}$, the map $(b\mapsto \pi_{\epsilon}(b)\eta_{\epsilon})$ is faithful on $B_{\epsilon}$ (hence $\pi_{\epsilon}|_{B_{\epsilon}}$ is also faithful). Although the representation $\pi_{\epsilon}$ may be not faithful on $A$ we will simply write $a\xi$ for $\pi_{\epsilon}(a)\xi$ when $\xi\in H_{\epsilon}$ and $a\in A$. We will also use the notation $\widehat{a}=\pi_{\epsilon}(a)\eta_{\epsilon}\in H_{\epsilon}$ for $a\in A$.

Observe that the submodule $\eta_{\epsilon} B$ is orthogonally complemented in $H_{\epsilon}$. Denote by $H_{\epsilon}^{\circ}$ the orthogonal complement of $\eta_{\epsilon} B$ in $H_{\epsilon}$ (it is the closure of $\{x\eta_{\epsilon}\,:\,E_{\epsilon}(x)=0\}$). One has $H_{\epsilon}=\eta_{\epsilon} B\oplus H_{\epsilon}^{\circ}$ and $B_{\epsilon}H_{\epsilon}^{\circ}= H_{\epsilon}^{\circ}$. 

For $n\geq 1$ and $\epsilon_1,\ldots,\epsilon_n\in\{-1,1\}$ define $K_0=H_{-\epsilon_1}$, $K_n=H_1$ and, for $n\geq 2$ and $1\leq i\leq n-1$,
$$K_i=\left\{\begin{array}{lcl}
H_{-\epsilon_i} &\text{if}&\epsilon_i=\epsilon_{i+1},\\
H_{\epsilon_i}^{\circ}&\text{if}&\epsilon_i\neq\epsilon_{i+1}.\end{array}\right.$$

For $i=0,\ldots,n$, we view all the $K_i$ as a Hilbert $B$-module as explained before. For $i=1,\ldots,n$ we have a representation $\rho_i\,:\,B\rightarrow \mathcal{L}_B(K_i)$ defined by, if $\xi\in K_i$ and $b\in B$,
$$\rho_i(b)\xi=\left\{\begin{array}{lcl}
b\xi&\text{if}&\epsilon_i=1,\\
\theta(b)\xi&\text{if}&\epsilon_i=-1.\end{array}\right.$$

Define the Hilbert $B$-module $\mathcal{H}_{\epsilon_1,\ldots,\epsilon_n}=K_0\underset{\rho_1}{\ot}\ldots\underset{\rho_n}{\ot} K_n$. The left action of $A$ on $K_0$ by left multiplication induces a left action of $A$ on $\mathcal{H}_{\epsilon_1,\ldots,\epsilon_n}$ in the obvious way.

We define the Hilbert $B$-module $\mathcal{H}$ by the orthogonal direct sum
$$\mathcal{H}=H_1\oplus\bigoplus_{n\geq 1,\,\epsilon_1,\ldots,\epsilon_n\in\{-1,1\}}\mathcal{H}_{\epsilon_1,\ldots,\epsilon_n},$$
with the left action of $A$ given by the direct sum of the left actions of $A$ on the Hilbert $B$-modules $\mathcal{H}_{\epsilon_1,\ldots,\epsilon_n}$ and the left action of $A$ on $H_1$. We denote this action by $\pi\,:\, A\rightarrow \mathcal{L}_B(\mathcal{H})$.

Observe that $\pi|_B$ is faithful. Also, if $\pi_1$ is faithful then $\pi$ is faithful.

Let $\epsilon\in\{-1,1\}$. We define an operator $u^{\epsilon}$ on $\mathcal{H}$ in the following way.
\begin{itemize}
\item If $\xi\in H_1$ we define $u^{\epsilon}\xi=\widehat{1}\ot\xi\in\mathcal{H}_{\epsilon}$.
\item If $\xi\in \mathcal{H}_{\epsilon_1,\ldots,\epsilon_n}$ with $n\geq 1$ and $\epsilon_1=\epsilon$ we define $u^{\epsilon}\xi=\widehat{1}\ot\xi\in \mathcal{H}_{\epsilon,\epsilon_1,\ldots,\epsilon_n}$.
\item If $\xi=\widehat{a}\ot\xi_0\in\mathcal{H}_{\epsilon_1}$ with $\epsilon_1\neq\epsilon$ and $\widehat{a}\in K_0=H_{\epsilon}$, $\xi_0\in K_1=H_1$ we define
$$u^{\epsilon}(\widehat{a}\ot\xi_0)=\left\{
\begin{array}{llcl}
\widehat{1}\ot\widehat{a}\ot\xi_0&\in\mathcal{H}_{\epsilon,\epsilon_1}&\text{if}&E_{\epsilon}(a)=0,\\
\theta^{\epsilon}(a)\xi_0&\in H_{1}&\text{if}&a\in B_{\epsilon}.\\
\end{array}\right.$$
\item If $\xi=\widehat{a}\ot\xi_0\in\mathcal{H}_{\epsilon_1,\ldots,\epsilon_n}$ with $n\geq 2$, $\epsilon_1\neq\epsilon$ and $\widehat{a}\in K_0$, $\xi_0\in K_1\underset{\rho_2}{\ot}\ldots\underset{\rho_n}{\ot} K_n$ (which is a sub-$B$-module of $\mathcal{H}_{\epsilon_2,\ldots,\epsilon_n}$) we define
$$u^{\epsilon}(\widehat{a}\ot\xi_0)=\left\{
\begin{array}{llcl}
\widehat{1}\ot\widehat{a}\ot\xi_0&\in\mathcal{H}_{\epsilon,\epsilon_1,\ldots,\epsilon_n}&\text{if}&E_{\epsilon}(a)=0,\\
\theta^{\epsilon}(a)\xi_0&\in\mathcal{H}_{\epsilon_2,\ldots,\epsilon_n}&\text{if}&a\in B_{\epsilon}.\\
\end{array}\right.$$
\end{itemize}
It is easy to check that $u^{\epsilon}$ commutes with the right action of $B$ and extends to a unitary on the Hilbert $C^*$-module $\mathcal{H}$ such that $(u^{\epsilon})^*=u^{-\epsilon}$ so that the superscript $\epsilon$ really means ``to the power $\epsilon$''. We denote by $u$ the unitary $u^1$. One can also easily check the following formula :
$$u\pi(b)u^*=\pi(\theta(b))\quad\text{for all}\quad b\in B.$$

Although it is not necessary, we will assume, to simplify notations and for the rest of this section, that $E_{\epsilon}$, for $\epsilon\in\{-1,1\}$, is G.N.S. faithful i.e., $\pi_{\epsilon}$ is faithful. Hence, $\pi$ is faithful and we may and will assume that $A\subset\mathcal{L}_B(\mathcal{H})$ and $\pi=\id$. The preceding relation becomes $ubu^*=\theta(b)$ for all $b\in B$.

\begin{definition}
The \textit{reduced} HNN \textit{extension} $\HNN(A,B,\theta)$ is the $C^*$-subalgebra of $\mathcal{L}_B(\mathcal{H})$ generated by $A$ and $u$:
$$\HNN(A,B,\theta):=\langle A,u\rangle\subset\mathcal{L}_B(\mathcal{H}).$$
\end{definition}

Let $P=\HNN(A,B,\theta)$. An operator $x\in P$ of the form $x=x_0u^{\epsilon_1}x_1\ldots u^{\epsilon_n}x_n$ with $n\geq 1$, $x_i\in A$ and $\epsilon_i\in\{-1,1\}$ will be called \textit{reduced} if for all $1\leq i\leq n-1$ we have $E_{\epsilon_i}(x_i)=0$ whenever $\epsilon_{i+1}\neq\epsilon_i$. Observe that our terminology is different from the one adopt in \cite{FV12}: we do not allow $n=0$ in the definition of a reduced operator.

Let $\Omega=\eta_1\in H_1\subset\mathcal{H}$. Observe that $\Omega$ is $B$-central. Namely, $b\Omega=\Omega b$ for all $b\in B$. Let $x=x_0u^{\epsilon_1}\ldots u^{\epsilon_n}x_n$ be a reduced operator. One has
\begin{eqnarray}\label{gns}
x\Omega=\hat{x}_0\ot\ldots\ot\hat{x}_n\in\mathcal{H}_{\epsilon_1,\ldots,\epsilon_n}.
\end{eqnarray}
It follows that the integer $n$ (and the sequence $\epsilon_1,\ldots,\epsilon_n$) only depends on the operator $x$. The integer $n$ is called the \textit{length} of the reduced operator $x$.

Let $\mathcal{P}$ be the vector subspace of $P$ spanned by the reduced operators and $A$. By the relation $\theta(b)=ubu^*$ for $b\in B$, it is easy to check that $\mathcal{P}$ is a *-subalgebra of $P$. Moreover, by definition of the HNN extension, $\mathcal{P}$ is dense in $P$.

Define, for $x\in P$, $E_B(x)=\langle\Omega,x\Omega\rangle\in B$. It is easily seen that $E_B$ a conditional expectation onto $B$ satisfying $E_B|_A=E_1$. Moreover, using $(\ref{gns})$, we see that, for all reduced operator $x\in P$, one has $E_B(x)=0$. Equation $(\ref{gns})$ also implies that $\overline{P\Omega}=\mathcal{H}$ hence, $(\mathcal{H},\id,\Omega)$ is the GNS construction of $E_B$.

Define, for $x\in P$, $E_{\theta(B)}(x)=uE_B(u^*xu)u^*\in \theta(B)$. Again, it is easy to check that $E_{\theta(B)}$ a conditional expectation onto $\theta(B)$ satisfying $E_{\theta(B)}|_A=E_{-1}$.

The reduced HNN extension $P$ satisfies the following universal property.

\begin{proposition}\label{universal}
Let $C$ be a unital $C^*$-algebra with a unital faithful $*$-homomorphism $\rho\,:\,A\rightarrow C$. Suppose that there exists a unitary $w\in C$ and a conditional expectation $E'$ from $C$ to $\rho(B)$ such that:
\begin{enumerate}
\item $C$ is generated by $\rho(A)$ and $w$.
\item $w\rho(b)w^*=\rho(\theta(b))$ for all $b\in B$ and $E'\circ\rho=\rho\circ E_1$.
\item For all $n\geq 1$, $\epsilon_1,\ldots,\epsilon_n\in\{-1,1\}$ one has $E'(\rho(x_0)w^{\epsilon_1}\ldots w^{\epsilon_n}\rho(x_n))=0$ for all $x_i\in A$ such that $E_{\epsilon_i}(x_i)=0$ whenever $\epsilon_i\neq \epsilon_{i+1}$, $1\leq i\leq n-1$.
\item $E'$ is G.N.S. faithful i.e., for all $x\in C$, if $E'(y^*x^*xy)=0$ for all $y\in C$, then $x=0$.
\end{enumerate}
Then,  there exists a unique $*$-isomorphism $\widetilde{\rho}\,:\, P\rightarrow C$ such that
$$\widetilde{\rho}(u)=w\quad\text{and}\quad\widetilde{\rho}(a)=\rho(a)\,\,\text{for all}\,\,a\in A.$$
Moreover, $\widetilde{\rho}$ intertwines $E'$ and $E_B$.
\end{proposition}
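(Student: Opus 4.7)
The plan is to realize $C$ concretely on the GNS space of $E'$ and exhibit a unitary identification with $\mathcal{H}$. Let $(\mathcal{H}',\pi',\Omega')$ be the GNS construction of $E'$; by hypothesis (4), $\pi'$ is faithful, and since $\rho|_B$ is injective, $\mathcal{H}'$ may be viewed as a Hilbert $B$-module (via $B \cong \rho(B)$). The main task is to build a unitary $V:\mathcal{H}\to\mathcal{H}'$ of Hilbert $B$-modules satisfying $VaV^*=\pi'(\rho(a))$ for $a\in A$ and $VuV^*=\pi'(w)$. Having such a $V$, hypothesis (1) implies $\mathrm{Ad}(V)$ carries $P=\langle A,u\rangle$ onto $\pi'(C)$, and $\widetilde{\rho}:=\pi'^{-1}\circ\mathrm{Ad}(V)|_P$ is the sought isomorphism, uniquely determined by its values on $A$ and $u$ by density of $\mathcal{P}$.

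On $H_1\subset\mathcal{H}$, I would define $V\widehat{a}:=\pi'(\rho(a))\Omega'$; this is well-defined and isometric since $E'\circ\rho=\rho\circ E_1$ by (2) and $\rho$ is injective. On each $\mathcal{H}_{\epsilon_1,\ldots,\epsilon_n}$, define $V$ on elementary tensors by
\[
V(\widehat{x}_0\otimes\cdots\otimes\widehat{x}_n):=\pi'\bigl(\rho(x_0)w^{\epsilon_1}\rho(x_1)\cdots w^{\epsilon_n}\rho(x_n)\bigr)\Omega'.
\]
Compatibility with the $\rho_i$-balanced structure follows from (2) in the form $w^{\pm}\rho(b)=\rho(\theta^{\pm}(b))w^{\pm}$, which exactly matches the balancing rule $\xi b\otimes\eta=\xi\otimes\rho_i(b)\eta$ given the definition of $\rho_i$. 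The isometry property then reduces to the identity
\[
E'\bigl(\Phi(\xi)^*\Phi(\eta)\bigr)=\rho\bigl(\langle\xi,\eta\rangle\bigr),\qquad\xi,\eta\in\mathcal{H}_{\epsilon_1,\ldots,\epsilon_n},
\]
where $\Phi(\xi)$ denotes the reduced word in $C$ produced from an elementary tensor $\xi$.

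This identity is the core of the argument and requires the standard HNN reduction procedure: expanding $\Phi(\xi)^*\Phi(\eta)$, one collapses adjacent $w^{\epsilon}\rho(B_\epsilon)w^{-\epsilon}$-blocks using (2), splits each middle factor $\rho(x_i)$ as $\rho(E_{\epsilon_i}(x_i))+\rho(x_i-E_{\epsilon_i}(x_i))$, and iterates. The resulting terms are of two types: $\rho$-images of iterated conditional expectations $E_{\epsilon_i}$ — which, by the definition of the $\rho_i$-balanced tensor product, are exactly what $\rho(\langle\xi,\eta\rangle)$ produces — and $E'$-values on reduced words of length $\geq 1$, which vanish by (3). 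Managing this combinatorial bookkeeping is the main obstacle, but the argument is purely formal once (2) and (3) are in hand. Once $V$ is an isometry, its image contains $\pi'(\langle\rho(A),w\rangle)\Omega'$, which is dense in $\mathcal{H}'$ by (1), so $V$ is unitary. The intertwining relations $VaV^*=\pi'(\rho(a))$ and $VuV^*=\pi'(w)$ are immediate from the definition of $V$ on elementary tensors combined with the explicit action of $A$ and $u^{\pm 1}$ on $\mathcal{H}$ recalled in the preliminaries. Finally, $V\widehat{1}=\Omega'$ together with $\langle V\cdot,V\cdot\rangle_{\mathcal{H}'}=\rho(\langle\cdot,\cdot\rangle_{\mathcal{H}})$ gives $E'(\widetilde{\rho}(x))=\rho(E_B(x))=\widetilde{\rho}(E_B(x))$ for $x\in P$, which is the claimed intertwining.
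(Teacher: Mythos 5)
Your proposal is correct and follows essentially the same route as the paper: form the G.N.S. construction of $E'$ as a Hilbert $B$-module (faithful by hypothesis 4), define the intertwining map $V$ on reduced words $x\Omega \mapsto \rho(x_0)w^{\epsilon_1}\ldots w^{\epsilon_n}\rho(x_n)\eta'$, verify it is a unitary using hypotheses (2) and (3), and set $\widetilde{\rho} = \mathrm{Ad}(V)$. The only difference is one of detail: you spell out the balanced-tensor compatibility, the isometry identity via the splitting $x_i = E_{\epsilon_i}(x_i) + (x_i - E_{\epsilon_i}(x_i))$, and the intertwining of $E'$ with $E_B$, all of which the paper compresses into ``it is easy to check.''
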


\begin{proof}
Since $P$ is generated by $A$ and $u$, the uniqueness is obvious. Let $(H',\rho',\eta')$ be the GNS construction of $\rho^{-1}\circ E'$ i.e, $H'$ as a Hilbert $B$-module obtained by separation and completion of $C$ for the $B$-valued scalar product $\langle x,y\rangle= \rho^{-1}\circ E'(x^*y)$, the right action of $b\in B$ is given by the right multiplication by $\rho(b)$, $\rho'$ is the representation of $C$ given by left multiplication and $\eta'$ is the image of $1$ in $H'$. By $4$, $\rho'$ is faithful so we may and will assume that $C\subset\mathcal{L}_B(H')$ and $\rho'=\id$. Define
$V\,:\,\mathcal{H}\rightarrow H'$ by $Va\Omega=\rho(a)\eta'$ for $a\in A$ and, for $x=x_0u^{\epsilon_1}\ldots u^{\epsilon_n}x_n\in P$ a reduced operator, $Vx\Omega=\rho(x_0)w^{\epsilon_1}\ldots w^{\epsilon_n}\rho(x_n)\eta'$. It is easy to check that $V$ extends to a unitary $V\in\mathcal{L}_B(\mathcal{H},H')$ such that $VaV^*=\rho(a)$ for all $a\in A$ and $VuV^*=w$. Then, $\widetilde{\rho}(x)=VxV^*$ does the job. 
\end{proof}

We construct now a conditional expectation from $P$ to $A$. Let $Q\in\mathcal{L}_B(\mathcal{H})$ be the projection onto the Hilbert sub-B-module $H_1$ of $\mathcal{H}$. Then, it is easy to check that the formula $E_A(x)=QxQ\in\mathcal{L}_B(Q\mathcal{H})=\mathcal{L}_B(H_1)$ defines a conditional expectation from $P$ to $A\subset\mathcal{L}_B(H_1)$ satisfying:
$$E_A(x)=0\quad\text{for all reduced operator $x\in P$}.$$
Moreover, $E_{\epsilon}\circ E_A=E_{B_{\epsilon}}$ for $\epsilon\in\{-1,1\}$.

\textbf{The maximal HNN extension}

The maximal (or full, or universal) HNN extension was also introduced in \cite{Ue05}. We keep the same notations as before and we still assume that we have conditional expectations with faithful G.N.S. constructions from $A$ to $B_{\epsilon}$ for $\epsilon\in\{-1,1\}$. The maximal HNN extension is the unital $C^*$-algebra $P_m$ generated by $A$ and a unitary $w\in P_m$ such that  $wbw^*=\theta(b)$ for all $b\in B$ and satisfying the universal property that whenever $C$ is a unital $C^*$-algebra with a unitary $u\in C$ and a $*$-homomorphism $\rho\,:\,A\rightarrow C$ such that $u\rho(b)u^*=\rho(\theta(b))$ for all $b\in B$ there exists a unique $*$-homomorphism $\widetilde{\rho}\,:\,P_m\rightarrow C$ such that $\widetilde{\rho}|_A=\rho$ and $\widetilde{\rho}(w)=u$. Such a $C^*$-algebra is obviously unique (up to a canonical isomorphism) and is denoted by ${\rm HNN}_{\text{max}}(A,B,\theta)$.

\section{HNN extensions of Compact Quantum Groups}\label{HNNCQG}

We consider two reduced compact quantum groups $\G_A=(A,\Delta_A)$ and $\mathbb{G}_B=(B,\Delta_B)$. We denote by $\varphi_A$ and $\varphi_B$ the Haar (faithful) states on $A$ and $B$ respectively.

We suppose that $\theta\,:\,B\rightarrow A$ is an injective unital *-homomorphism which intertwines the comultiplications.  Hence, $\theta(B)$ is a Woronowicz $C^*$-subalgebra of $A$. By \cite{Ve04}, $\varphi_A\circ\theta=\varphi_B$ and there exists a unique conditional expectation $E_{\theta}\,:\, A\rightarrow\theta(B)$ such that $\varphi_A=\varphi_B\circ\theta^{-1}\circ E_{\theta}$. Since $\varphi_A$ is faithful, $E_{\theta}$ is faithful. In particular, $E_{\theta}$ is G.N.S. faithful. This conditional expectation is also characterized by the following invariance property:
$$(\id\ot E_{\theta})\circ\Delta_A=(E_{\theta}\ot\id)\circ\Delta_A=\Delta_B\circ\theta^{-1}\circ E_{\theta}=\Delta_A\circ E_{\theta}.$$

We suppose that $B\subset A$ is a Woronowicz $C^*$-subalgebra. We can apply the preceding discussion to the map $\theta=\id$. In particular, we have a G.N.S. faithful conditional expectation $E_1\,:\,A\rightarrow B$. Let $\theta\,:\,B\rightarrow A$ be an embedding which intertwines the comultiplications. Once again, the preceding discussion applies to $\theta$ and we have a G.N.S. faithful conditional expectation $E_{-1}\,:\,A\rightarrow \theta(B)$. We will freely use the notations and results of section \ref{HNNCstar}. Define $P=\HNN_{\text{red}}(A,B,\theta)=\langle A, u\rangle$. 

From the hypothesis, we also have canonical inclusions $C_{\text{max}}(\G_B)\subset C_{\text{max}}(\G_A)$ which intertwine the comultiplications. Also, since the injective morphism $\theta\,:\,\mathcal{C}(\G_B)\rightarrow\mathcal{C}(\G_A)$ intertwines the comultiplications we have a canonical injective morphism $\theta\,:\,C_{\text{max}}(\G_B)\rightarrow C_{\text{max}}(\G_A)$ which intertwines the comultiplications.

Define $P_{m}:={\rm HNN}_{\text{max}}(C_{\text{max}}(\G_A),C_{\text{max}}(\G_B),\theta)=\langle C_{\text{max}}(\G_A), w\rangle$. By the universal property, there exists a unique $*$-homomorphism $\Delta_m\,:\, P_m\rightarrow P_m\ot P_m$ such that
$$\Delta(w)=w\ot w\quad\text{and}\quad\Delta_m(a)=\Delta_A(a)\,\,\,\,\forall a\in C_{\text{max}}(\G_A).$$
$P_m$ is generated, as a $C^*$-algebra, by the elements $v^x_{i,j}$ for $x\in{\rm Irr}(\G_A)$ and $1\leq i,j\leq \text{dim}(x)$ and by $w$ for which it is easy to check that the conditions of \cite[Definition 2.1']{Wa95} are satisfied. Hence, $\G_m=(P_m,\Delta_m)$ is a compact quantum group.

Let us denote by $\lambda$ the canonical surjective morphism from $C_{\text{max}}(\G_A)$ to $A$. By the universal property, we have a unique $*$-homomorphism, still denoted by $\lambda$, from $P_m$ to $P$ such that
$$\lambda(w)=u\quad\text{and}\quad\lambda(a)=\lambda(a)\,\,\,\text{for all}\,\,a\in C_{\text{max}}(\G_A).$$

We view $\irr(\G_B)$ as a subset of $\irr(\G_A)$ and we also view $\irr(\G_A)$ as a subset of $\irr(\G_m)$. The map $\theta$ induces an injective map, still denoted by $\theta$, from $\irr(\G_B)$ to $\irr(\G_A)$. For $\epsilon\in\{-1,1\}$ we define
$$\irr(\G_A)_{\epsilon}=\left\{\begin{array}{lcl}
\irr(\G_A)\setminus\irr(\G_B)&\text{if}&\epsilon=1,\\
\irr(\G_A)\setminus\theta(\irr(\G_B))&\text{if}&\epsilon=-1.\end{array}\right.$$
Observe that $w\in P_m$ is a irreducible representation of $\G_m$ of dimension $1$.

Let $v$ be a unitary representation of $\G_m$. We call $v$ \textit{reduced} if $v$ is of the form $v=v^{x_0}\ot w^{\epsilon_1}\ot\ldots\ot w^{\epsilon_n}\ot v^{x_n}$ where $n\geq 1$, $x_k\in\irr(\G_A)$ and $\epsilon_k\in\{-1,1\}$ are such that, for all $1\leq k\leq n-1$,  $x_k\in\irr(\G_A)_{\epsilon_k}$ whenever $\epsilon_k\neq\epsilon_{k+1}$.

\begin{theorem}\label{rep}
The following holds.
\begin{enumerate}
\item The Haar state is given by $\varphi_m=\varphi_A\circ E_A\circ\lambda$.
\item Every non-trivial irreducible unitary representation of $\G_m$ is unitarily equivalent to a subrepresentation of a reduced representation or to an irreducible representation of $\G_A$. Hence, $\irr(\G_A)$ and $w$ generate the representation category of $\G_m$.
\item The reduced $C^*$-algebra of $\G_m$ is $P$, the maximal one is $P_m$.
\end{enumerate}
\end{theorem}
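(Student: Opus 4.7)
The plan is to handle (1), (3), and (2) in that order. For (1), define $\psi := \varphi_A \circ E_A \circ \lambda : P_m \to \C$ and verify that it is invariant under $\Delta_m$; uniqueness of the Haar state then forces $\varphi_m = \psi$. By density it suffices to check $(\id \ot \psi)\Delta_m(x) = \psi(x) 1 = (\psi \ot \id)\Delta_m(x)$ on the linear span of $\mathcal{C}(\G_A)$ and pre-reduced words $x = a_0 w^{\epsilon_1} a_1 \cdots w^{\epsilon_n} a_n$ (meaning $n \geq 1$, $a_i \in \mathcal{C}(\G_A)$, and $E_{\epsilon_i}(a_i) = 0$ for $1 \leq i \leq n-1$ whenever $\epsilon_i \neq \epsilon_{i+1}$); using $w b w^* = \theta(b)$ this span coincides with the $*$-subalgebra generated by $\mathcal{C}(\G_A)$ and $w$, hence is dense in $P_m$. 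On $C_{\text{max}}(\G_A)$, $\psi$ restricts to $\varphi_A \circ \lambda$, the maximal Haar state of $\G_A$, and its invariance is known. For pre-reduced $x$, Sweedler-expand
$$\Delta_m(x) = \sum a_0^{(1)} w^{\epsilon_1} a_1^{(1)} \cdots w^{\epsilon_n} a_n^{(1)} \ot a_0^{(2)} w^{\epsilon_1} a_1^{(2)} \cdots w^{\epsilon_n} a_n^{(2)}.$$
At each internal index $i$ with $\epsilon_i \neq \epsilon_{i+1}$ the intertwining $(\id \ot E_{\epsilon_i})\Delta_A = \Delta_A \circ E_{\epsilon_i}$, lifted to $C_{\text{max}}(\G_A)$, combined with $E_{\epsilon_i}(a_i) = 0$, allows one to replace $a_i^{(2)}$ by $a_i^{(2)} - E_{\epsilon_i}(a_i^{(2)})$ without changing the above sum. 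After these replacements the second leg is itself pre-reduced, so its $\lambda$-image is a reduced operator in $P$ and is killed by $E_A$; hence $\psi$ vanishes on it and $(\id \ot \psi)\Delta_m(x) = 0 = \psi(x)$. The symmetric identity $(E_{\epsilon_i} \ot \id)\Delta_A = \Delta_A \circ E_{\epsilon_i}$ handles $(\psi \ot \id)\Delta_m(x) = 0$ analogously.

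For (3), the reduced $C^*$-algebra of $\G_m$ is the quotient of $P_m$ by the null space of $\varphi_m$. Since $\varphi_m = (\varphi_A \circ E_A) \circ \lambda$ with $\lambda$ surjective, $E_A$ G.N.S.-faithful by the construction of section \ref{HNNCstar}, and $\varphi_A$ the faithful Haar state on reduced $\G_A$, the composition $\varphi_A \circ E_A$ is faithful on $P$, so the null space of $\varphi_m$ equals $\ker \lambda$ and $C_{\text{red}}(\G_m) \cong P$. For $C_{\text{max}}(\G_m) \cong P_m$, part (2) (argued next) shows that $\mathcal{C}(\G_m)$ is the $*$-subalgebra of $P_m$ generated by $\mathcal{C}(\G_A)$ and $w$. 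Any $C^*$-completion $C$ of $\mathcal{C}(\G_m)$ therefore provides, via maximality of $C_{\text{max}}(\G_A)$, a $*$-homomorphism $C_{\text{max}}(\G_A) \to C$ together with a unitary $w \in C$ satisfying $w b w^* = \theta(b)$, and hence, by the universal property of $P_m$, a $*$-homomorphism $P_m \to C$. Taking $C = C_{\text{max}}(\G_m)$ and composing with the canonical surjection $C_{\text{max}}(\G_m) \to P_m$ produces the identity on a dense subalgebra, so $P_m \cong C_{\text{max}}(\G_m)$.

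For (2), every non-trivial irreducible of $\G_m$ is a subrepresentation of a tensor product of generators; since $\irr(\G_A) \subset \irr(\G_m)$ and $w$ is an irreducible $1$-dimensional representation, these generators are $\irr(\G_A) \cup \{w\}$, and any such tensor product has the form $v^{x_0} \ot w^{\epsilon_1} \ot v^{x_1} \ot \cdots \ot w^{\epsilon_n} \ot v^{x_n}$. If some internal $v^{x_i}$ at a sign change $\epsilon_i \neq \epsilon_{i+1}$ contains an irreducible sub-representation $y \in \irr(\G_B)$, the relation $w b w^* = \theta(b)$ produces a non-zero intertwiner between this tensor expression and a strictly shorter one, absorbing $y$ or $\theta(y)$ through $w^{\epsilon_i} w^{\epsilon_{i+1}} = 1$. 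Iterating, every non-trivial irreducible of $\G_m$ is equivalent to a subrepresentation of a reduced representation or of an element of $\irr(\G_A)$. The main obstacle throughout is the invariance computation in (1): once the conditional expectations $E_{\pm 1}$ are lifted to $C_{\text{max}}(\G_A)$ compatibly with $\Delta_A$ and the Sweedler ``prune the middle'' trick is executed carefully at all sign-change indices simultaneously, the remaining pieces of the proof reduce to standard Peter-Weyl and universal-property manipulations.
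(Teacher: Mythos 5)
Your proposal is correct, and its global architecture---verify invariance of $\varphi_A\circ E_A\circ\lambda$ on the dense subalgebra spanned by $\mathcal{C}(\G_A)$ and the pre-reduced words, use faithfulness of $\varphi_A\circ E_A$ for the reduced statement, and play the two universal properties against each other for the maximal one---is the same as the paper's; but parts 1 and 2 are executed by genuinely different means. For part 1, the paper observes that your pre-reduced words are precisely the coefficients of its reduced \emph{representations}, which are unitary representations of $\G_m$; hence $\Delta_m(\mathcal{P}_m)\subset\mathcal{P}_m\odot\mathcal{P}_m$ is automatic from $\Delta_m(v_{ij})=\sum_k v_{ik}\ot v_{kj}$, and invariance is immediate because $E_A\circ\lambda$ kills $\mathcal{P}_m$. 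Your Sweedler pruning proves the same vanishing by hand, and it is legitimate: the key identity $(\id\ot E_{\epsilon_i})\Delta_A(a_i)=\Delta_A(E_{\epsilon_i}(a_i))=0$ is exactly the invariance property of $E_\theta$ recorded in Section \ref{HNNCQG}, so replacing $\Delta_A(a_i)$ by $(\id\ot(\id-E_{\epsilon_i}))\Delta_A(a_i)$ is an equality in $\mathcal{C}(\G_A)\odot\mathcal{C}(\G_A)$; it is simply more laborious than the paper's coalgebra observation. For part 2, the paper gives a one-line Peter--Weyl argument: the coefficients of reduced representations and of $\irr(\G_A)$ span a dense subspace, so any irreducible not contained in one of them would have coefficients ${\rm L}^2$-orthogonal to a dense set. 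Your fusion/absorption argument (shorten non-reduced tensor words via $w\ot v^{y}\ot w^{-1}=v^{\theta(y)}$ for $y\in\irr(\G_B)$, and symmetrically at the other sign change) is in the style of Wang's analysis of free products: longer, but it exhibits the mechanism explicitly; note, however, that its first step---every irreducible of $\G_m$ embeds in a tensor word over $\irr(\G_A)\cup\{w^{\pm1}\}$---is itself the density-plus-orthogonality fact the paper quotes, so you postpone rather than avoid the appeal to general theory. One small caveat in your part 3: GNS-faithfulness of $E_A$ together with faithfulness of $\varphi_A$ yields only GNS-faithfulness of the state $\varphi_A\circ E_A$ on $P$; to upgrade to genuine faithfulness (hence to identify the null space of $\varphi_m$ with $\ker\lambda$) one should add that the null space of the invariant state $\varphi_m$ is a two-sided ideal, which a faithful GNS representation then forces to equal $\ker\lambda$. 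Since the paper asserts this faithfulness without comment, this is a shared ellipsis rather than a defect specific to your argument.
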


\begin{proof}
$1.$ Let $\mathcal{P}_m\subset P_m$ be the linear span of the coefficients of the reduced representations. It is easy to see that the linear span of $\mathcal{P}_m$ and $A$ is a dense $*$-subalgebra of $P_m$. Moreover, $\Delta_m(\mathcal{P}_m)\subset\mathcal{P}_m\odot\mathcal{P}_m$ and $\lambda(\mathcal{P}_m)$ is contained in the linear span of the reduced operators in $P$. Hence, $E_A\circ\lambda(\mathcal{P}_m)=\{0\}$. It follows that, for all $x\in\mathcal{P}_m$, $(\id\ot\varphi_m)\Delta_m(x)=(\varphi_m\ot\id)\Delta_m(x)=0=\varphi_m(x)1$. Hence, it suffices to check the invariance property for $x$ a coefficient of a irreducible representation of $\G_A$ for which it is obvious.

$2.$ Since the linear span of the coefficients of the reduced representations and the coefficients of the irreducible representations of $\G_A$ is dense in $P_m$, the result follows from the general theory.

$3.$ Since the morphism $\lambda$ is surjective and the state $\varphi_A\circ E_A$ is faithful on $P$, it follows from $1$ that the reduced $C^*$-algebra of $\G_m$ is $P$. Moreover, it follows from $2$ that $\mathcal{C}(\G_m)$ is equal to the linear span of $\mathcal{P}_m$ and $\mathcal{C}(\G_A)$. Hence, $C_{\text{max}}(\G_m)$ is generated, as a $C^*$-algebra, by $\mathcal{C}(\G_A)$ and $w$. By the universal property of $C_{\text{max}}(\G_A)$, we have a $*$-homomorphism from $C_{\text{max}}(\G_A)$ to $C_{\text{max}}(\G_m)$ which is the identity on $\mathcal{C}(\G_A)$. Since the relation $\theta(b)=wbw^*$ holds in $C_{\text{max}}(\G_m)$ for all $b\in C_{\text{max}}(\G_B)$, we have a surjective homomorphism from the HNN extension $P_m$ to $C_{\text{max}}(\G_m)$ which is the identity of $\mathcal{C}(\G_m)$. It follows that  $P_m=C_{\text{max}}(\G_m)$.
\end{proof}

\begin{remark}
One could have have constructed first the reduced compact quantum group and prove that the maximal one is $\G_m$. Indeed, one can prove directly, at the reduced level, that there exists a unique $*$-homomorphism $\Delta\,:\,P\rightarrow P\otimes P$ such that 
$$\Delta(u)=u\otimes u\quad\text{and}\quad\Delta(x)=\Delta_A(x)\,\,\forall x\in A.$$
To prove that, it suffices to consider the $C^*$-subalgebra $C$ of $P\ot P$ generated by $\Delta_A(A)$ and $u\ot u$, to view $\rho=\Delta_A$ as a unital faithful $*$-homomorphism from $A$ to $C$, and to check the hypothesis of Proposition \ref{universal} with the conditional expectation $E'=(\id\ot E_1)|_C$. One can also check easily that $\varphi=\varphi_A\circ E_A$ is $\Delta$-invariant. It follows from the general theory that $(P,\Delta)$ is a reduced compact quantum group. Moreover, one can show that the maximal $C^*$-algebra of $(P,\Delta)$ is $P_m$ by studying the representations, as in the proof of Theorem \ref{rep}.
\end{remark}

\begin{example}
Let $N<G$ be a non-trivial closed normal subgroup of a compact group $G$ and define $K=G/N$. Let $\theta\,:\,G\rightarrow K$ be a continuous surjective group homomorphism. View $C(K)\subset C(G)$ as $N$-right invariant functions and define the injective $*$-homomorphism $C(K)\rightarrow C(G)$ by composing with $\theta$. Then, one can perform the HNN construction to get a compact quantum group which is non-commutative and non-cocommutative whenever $G$ is non-commutative.
\end{example}

\section{K-amenability}

This section contains the proof of the following theorem.

\begin{theorem}\label{Kamenability}
An HNN extension of amenable discrete quantum groups is K-amenable.
\end{theorem}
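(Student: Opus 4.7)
The strategy is to adapt the Pimsner--Julg--Valette technique, as implemented for quantum amalgamated free products by Vergnioux \cite{Ve04}, to the HNN setting. Write $\lambda_m\,:\,P_m\twoheadrightarrow P$ for the canonical surjection of the maximal onto the reduced quantum HNN extension. Since $\G_A$ and $\G_B$ are amenable, $A=C_{\max}(\G_A)=C_{\text{red}}(\G_A)$ and $B=C_{\max}(\G_B)=C_{\text{red}}(\G_B)$; in particular the counits $\epsilon_A\,:\,A\rightarrow\C$, $\epsilon_B\,:\,B\rightarrow\C$ are bounded $*$-homomorphisms. The goal is to produce $\alpha\in{\rm KK}(P,\C)$ satisfying $\lambda_m^*(\alpha)=[\epsilon]\in{\rm KK}(P_m,\C)$.

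First I would build the quantum ``vertex'' and ``edge'' Hilbert spaces of the Bass--Serre tree associated to the HNN extension. Let $\mathcal{V}$ be the GNS space of the state $\epsilon_A\circ E_A$ on $P$, with cyclic vector $\Omega_V$, and let $\mathcal{E}$ be the GNS space of $\epsilon_B\circ E_B$; equivalently, $\mathcal{V}=\mathcal{H}\ot_A\C$ and $\mathcal{E}=\mathcal{H}\ot_B\C$, where $\C$ is the counit module. Both carry natural unitary representations $\pi_V,\pi_E$ of $P$ by left multiplication. By formula $(\ref{gns})$, $\mathcal{V}$ decomposes orthogonally as
$$\mathcal{V}=\C\Omega_V\oplus\bigoplus_{n\geq 1,\,(\epsilon_i)\in\{-1,1\}^n}\mathcal{V}_{\epsilon_1,\ldots,\epsilon_n},$$
and similarly for $\mathcal{E}$; these are the quantum analogues of $\ell^2$ of the vertices, respectively edges, at combinatorial distance $n$ from the basepoint of the Bass--Serre tree of the HNN extension.

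Next I would define a Julg--Valette operator $F\,:\,\mathcal{V}\rightarrow\mathcal{E}$ sending $\Omega_V\mapsto 0$ and mapping each summand $\mathcal{V}_{\epsilon_1,\ldots,\epsilon_n}$ ($n\geq 1$) isometrically onto the edge summand obtained by ``forgetting the last letter'' of the reduced word. Then $\id-F^*F$ is the rank-one projection onto $\C\Omega_V$ and $\id-FF^*=0$. The Fredholm-module condition reduces to checking compactness of $[F,\pi_V(a)]$ and $[F,\pi_V(u^{\pm 1})]$ for $a\in A$; this is the standard Pimsner computation, since left multiplication by a generator changes the length of a reduced operator by at most one and produces only finite-rank corrections at cancellation points on each length block. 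This yields a Kasparov $(P,\C)$-bimodule $(\mathcal{V}\oplus\mathcal{E},\pi_V\oplus\pi_E,F\oplus F^*)$ and a class $\alpha\in{\rm KK}(P,\C)$.

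The main obstacle is the identity $\lambda_m^*(\alpha)=[\epsilon]$. Here I would follow Pimsner's ``doubling of the tree'' argument and exhibit an explicit operator homotopy $F_t$, $t\in[0,\pi/2]$, at the \emph{maximal} level, rotating $\lambda_m^*(F)$ into an operator with image in the trivial $\C$-module of $\G_m$. Amenability of $\G_A$ and $\G_B$ enters decisively here: it ensures that $\Omega_V$ and $\Omega_E$ are asymptotically $P_m$-invariant, so that the rotation stays $P_m$-equivariant up to compacts for every $t$; this is exactly what fails when one works with $P$ alone, and it explains why the class $\alpha$ is non-trivial but its pullback is. The new input relative to the amalgamated free product case of \cite{Ve04} is a uniform-in-$t$ control of $[F_t,\pi_m(u^{\pm 1})]$; this relies on the universal relation $u\pi_m(b)u^*=\pi_m(\theta(b))$ together with the invariance $(\id\ot E_{\theta})\circ\Delta_A=\Delta_A\circ E_{\theta}$, which makes $u$ shift the sign-sequence decomposition compatibly on $\mathcal{V}$ and $\mathcal{E}$. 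Once these commutator estimates are in place the homotopy gives $\lambda_m^*(\alpha)=[\epsilon]$, and $\G_m$ is $K$-amenable.
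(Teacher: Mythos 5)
Your first half --- the construction of the class $\alpha$ --- is essentially identical to the paper's: the GNS representations $(H,\pi,\xi)$ and $(K,\rho,\eta)$ of the states $\epsilon\circ E_A$ and $\epsilon\circ E_B$ on $P$, and a Julg--Valette operator $\mathcal{F}$ with $\mathcal{F}\mathcal{F}^*=1$ and $1-\mathcal{F}^*\mathcal{F}$ the rank-one projection onto $\C\xi$, whose commutation defects against $A$ and $u^{\pm1}$ are finite rank (the paper even gets exact intertwining $\mathcal{F}\pi(a)=\rho(a)\mathcal{F}$ for $a\in A$, and rank-one defects for $u^{\pm 1}$). The genuine gap is in the second half, the proof that $\lambda^*(\alpha)=[\epsilon]$, and it is twofold. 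First, the mechanism you invoke is false: amenability of $\G_A$ and $\G_B$ does \emph{not} make the cyclic vectors asymptotically $P_m$-invariant. Asymptotic $P_m$-invariance of $\xi$ in the representation $\widetilde{\pi}=\pi\circ\lambda$ would mean that the counit of $\G_m$ is weakly contained in a representation factoring through the reduced algebra $P$, i.e.\ that $\epsilon$ is bounded on $P$ --- that is coamenability of the HNN extension itself, which fails in general (already classically: the HNN extensions of $\Z$ are the Baumslag--Solitar groups, many of which contain free subgroups). Amenability is used only to have the counit $\epsilon$ defined on $A=C_{\text{red}}(\G_A)$, so that the states $\epsilon\circ E_A$ and $\epsilon\circ E_B$ exist at the reduced level; the vector $\xi$ is exactly $A$-invariant, and nothing stronger holds.

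Second, a homotopy that rotates only the operator $F_t$, with the two representations $\widetilde{\pi}=\pi\circ\lambda$ and $\widetilde{\rho}=\rho\circ\lambda\oplus\epsilon$ held fixed, can never terminate in a degenerate cycle: since the representations are unital, degeneracy forces the endpoint to be a unitary exactly intertwining $\widetilde{\pi}$ and $\widetilde{\rho}$, which is impossible whenever $\G_m$ is not coamenable, because $\widetilde{\pi}$ kills $\ker\lambda$ while $\widetilde{\rho}$ (which contains $\epsilon$ as a direct summand) does not. So the homotopy must move the \emph{representation}, and supplying the mechanism for this is exactly the missing idea. The paper proceeds as follows: enlarge $K$ to $\widetilde{K}=K\oplus\C\Omega$ so that the Julg--Valette operator extends to a unitary $\widetilde{\mathcal{F}}$; compute the defect $\widetilde{\mathcal{F}}\widetilde{\pi}(w)\widetilde{\mathcal{F}}^*=\widetilde{\rho}(w)v$, where $v$ is an explicit unitary; check the crucial algebraic fact that $v$ commutes with $\widetilde{\rho}(B)$; then write $v=e^{ia}$ with $a$ self-adjoint in $\widetilde{\rho}(B)'$ and set $w_s=\widetilde{\rho}(w)e^{isa}$. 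Since $w_s\widetilde{\rho}(b)w_s^*=\widetilde{\rho}(\theta(b))$ for all $b\in B$, the universal property of $P_m$ produces a continuous path of representations $\rho_s$ of $P_m$ with $\rho_s|_A=\widetilde{\rho}|_A$ and $\rho_s(w)=w_s$, and the cycles $(\widetilde{\pi},\rho_s,\widetilde{\mathcal{F}})$ connect the cycle representing $\lambda^*(\alpha)-[\epsilon]$ to a degenerate one. In other words, the HNN relation $wbw^*=\theta(b)$ is used to \emph{define new representations along the path} --- not, as in your plan, to control commutators $[F_t,\pi_m(u^{\pm1})]$ uniformly in $t$.
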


\begin{proof}

Let $P=\text{HNN}(A,B,\theta)=\langle A, u\rangle$ be a reduced HNN extension of $C^*$-algebras. Let $E_A$ and $E_B$ be the conditional expectations from $P$ to $A$ and $B$ respectively.

\begin{lemma}\label{iso}
Let $x=x_0u^{\epsilon_1}\ldots u^{\epsilon_n}\in P$ be a reduced word in $P$.
\begin{enumerate}
\item If $\epsilon_n=-1$ then $E_A(x^*x)=\theta\circ E_B((xu)^*xu)$.
\item If $\epsilon_n=1$ then $E_A(x^*x)=E_B(x^*x)$.
\end{enumerate}
\end{lemma}

\begin{proof}
$1.$ We prove it by induction on $n$. If $n=1$, write $x=x_0u^*$, then
$$E_A(x^*x)=E_A(ux_0^*x_0u^*)=E_A(uE_B(x_0^*x_0)u^*)=\theta\circ E_B(x_0^*x_0)=\theta\circ E_B((xu)^*xu).$$
Suppose that $1$ holds for $n$. Let $x=x_0u^{\epsilon_1}\ldots u^{\epsilon_n}x_nu^*$ be a reduced word. In the following computation we use the notation $E_{-1}=E_{\theta(B)}$ and $E_1=E_B$. One has
\begin{eqnarray*}
E_A(x^*x)&=&E_A(ux_n^*u^{-\epsilon_n}\ldots u^{-\epsilon_1}x_0^*x_0u^{\epsilon_1}\ldots u^{\epsilon_n}x_nu^*)\\
&=&E_A(ux_n^*u^{-\epsilon_n}\ldots \theta^{-\epsilon_1}(E_{-\epsilon_1}(x_0^*x_0))\ldots u^{\epsilon_n}x_nu^*)=E_A(y^*y),
\end{eqnarray*}
where $y=x_1'u^{\epsilon_2}\ldots u^{\epsilon_n}x_nu^*$ and $x_1'=\sqrt{\theta^{-\epsilon_1}(E_{-\epsilon_1}(x_0^*x_0))}x_1$. Observe that $y$ is reduced of length $n$ and ends with $u^*$. By the induction hypothesis one has $E_A(y^*y)=\theta\circ E_B((yu)^*yu)$. But
\begin{eqnarray*}
E_B((yu)^*yu)&=&E_B(x_n^*u^{-\epsilon_n}\ldots \theta^{-\epsilon_1}(E_{-\epsilon_1}(x_0^*x_0))\ldots u^{\epsilon_n}x_n)\\
&=&E_B(x_n^*u^{-\epsilon_n}\ldots u^{-\epsilon_1}x_0^*x_0u^{\epsilon_1}\ldots u^{\epsilon_n}x_n)=E_B((xu)^*xu).
\end{eqnarray*}
This finishes the proof of $1$. The proof of $2$ is similiar.
\end{proof}

We suppose that $\G_A=(A,\Delta_A)$ and $\G_B=(B,\Delta_B)$ are two reduced compact quantum groups such that the inclusion $B\subset A$ and the $*$-homomorphism $\theta$ intertwine the comultiplications. Assume that $\G_A$ is coamenable and let $\epsilon\,:\, A\rightarrow\C$ be the counit. Hence, $P$ is the reduced $C^*$-algebra of the HNN extension compact quantum group. Observe that $\epsilon\circ \theta=\epsilon$. Let $(H,\pi,\xi)$ be the GNS construction of the state $\epsilon\circ E_A$ on $P$ and $(K,\rho,\eta)$ be the GNS construction of the state $\epsilon\circ E_B$ on $P$. Define
$$H_0=\C\xi,\,\,H_{\pm 1}=\overline{\text{Span}}\{\pi(x)\xi\,:\,\,x=x_0u^{\epsilon_1}\ldots u^{\epsilon_n}\,\,\text{is a reduced word with}\,\epsilon_n=\pm 1\}.$$
Observe that the spaces $H_0, H_{-1}, H_1$ are pairwise orthogonal. Moreover, since $\pi(a)\xi=\epsilon(a)\xi$ for all $a\in A$, one has $H=H_0\oplus H_{-1}\oplus H_1$. We also define
$$
K_{-1}=\overline{\text{Span}}\{\rho(x)\eta\,:\,x\in A\,\,\text{or}\,\,x=x_0u^{\epsilon_1}\ldots u^{\epsilon_n}x_n\,\,\text{is a reduced word with}\,(\epsilon_n=- 1)\,\text{or}\,(\epsilon_n=1\,\text{and}\, E_B(x_n)=0)\},
$$
and $K_{1}=\overline{\text{Span}}\{\rho(x)\eta\,:\,x=x_0u^{\epsilon_1}\ldots u^{\epsilon_n}\,\,\text{is a reduced word with}\,\epsilon_n=1\}$. Observe that $K_{-1}$ and $K_1$ are orthogonal subspaces. Moreover, since $\rho(b)\eta=\epsilon(b)\eta$ for all $b\in B$, one has $K=K_{-1}\oplus K_1$.

By lemma \ref{iso} (and since $\epsilon\circ\theta=\epsilon$) we have isometries $F_i\,:\, H_i\rightarrow K_i$, for $i=\pm 1$, defined by, for $x=x_0u^{\epsilon_1}\ldots u^{\epsilon_n}$ a reduced word in $P$,
$$F_{-1}(\pi(x)\xi)=\rho(xu)\eta\,\,\text{if}\,\,\epsilon_n=-1\quad\text{and}\quad F_1(\pi(x)\xi)=\rho(x)\eta\,\,\text{if}\,\,\epsilon_n=1.$$
Since $F_{-1}$ and $F_1$ are clearly surjective, they are unitaries. Hence, we get a unitary
$$F=F_{-1}\oplus F_1\,:\,H_{-1}\oplus H_1\rightarrow K_{-1}\oplus K_1.$$

We define the \textit{Julg-Valette operator} $\mathcal{F}\,:\,H\rightarrow K$ by $\mathcal{F}|_{\C\xi}=0$ and $\mathcal{F}|_{H_{-1}\oplus H_1}=F$. Hence, $\mathcal{F}$ is a partial isometry with $\mathcal{F}\mathcal{F}^*=1$ and $\mathcal{F}^*\mathcal{F}=1-p$ where $p$ is the orthogonal projection onto the one dimensional subspace $\C\xi$.

\begin{lemma}\label{compact}
The following holds.
\begin{enumerate}
\item For all $a\in A$, $\mathcal{F}\pi(a)=\rho(a)\mathcal{F}$.
\item $\mathcal{F}\pi(u)-\rho(u)\mathcal{F}$ is a rank one operator with image $\C\rho(u)\eta$.
\item $\mathcal{F}\pi(u^*)-\rho(u^*)\mathcal{F}$ is a rank one operator with image $\C\eta$.
\end{enumerate}
\end{lemma}

\begin{proof}
$1.$ Let $a\in A$. One has $\mathcal{F}\pi(a)\xi=\epsilon(a)\mathcal{F}\xi=0=\rho(a)\mathcal{F}\xi$ and, for $x=x_0u^{\epsilon_1}\ldots u^{\epsilon_n}$ a reduced word,
$$\mathcal{F}\pi(a)\pi(x)\xi=\mathcal{F}\pi(ax)\xi=\left\{\begin{array}{lcl}
\rho(axu)\eta=\rho(a)\mathcal{F}\pi(x)\xi &\text{if}&\epsilon_n=-1,\\
 \rho(ax)\eta=\rho(a)\mathcal{F}\pi(x)\xi &\text{if}&\epsilon_n=1.\end{array}\right.$$
 
$2.$ Let $x=x_0u^{\epsilon_1}\ldots u^{\epsilon_n}$ be a reduced word. If $n\geq 2$ then it is easy to see that $ux$ can be written has a reduced word or a sum of two reduced words that end with $u^{\epsilon_n}$. Hence,
 $$\mathcal{F}\pi(u)\pi(x)\xi=\mathcal{F}\pi(ux)\xi=\left\{\begin{array}{lcl}
\rho(uxu)\eta=\rho(u)\mathcal{F}\pi(x)\xi &\text{if}&\epsilon_n=-1,\\
 \rho(ux)\eta=\rho(u)\mathcal{F}\pi(x)\xi &\text{if}&\epsilon_n=1.\end{array}\right.$$
 If $n=1$ and $x=x_0u^{\epsilon}$. When ($\epsilon=1$) or ($\epsilon=-1$ and $E_B(x_0)=0$) we see that $ux$ can be written as a reduced word that ends with $u^{\epsilon}$. As before, we conclude that $\mathcal{F}\pi(u)\pi(x)\xi=\rho(u)\mathcal{F}\pi(x)\xi$ in this case. Hence, the operator $\mathcal{F}\pi(u)-\rho(u)\mathcal{F}$ vanishes on the subspace $L$ where
 \begin{eqnarray*}
 L^{\bot}&=&\overline{\text{Span}}\{\xi,\pi(x_0u^*)\xi\,:\,x_0\in B\}=\overline{\text{Span}}\{\xi,\pi(u^*\theta(x_0))\xi\,:\,x_0\in B\}\\
 &=&\overline{\text{Span}}\{\xi,\epsilon\circ\theta(x_0)\pi(u^*)\xi\,:\,x_0\in B\}=\C\xi\oplus\C\pi(u^*)\xi.
 \end{eqnarray*}
 Since $(\mathcal{F}\pi(u)-\rho(u)\mathcal{F})(\pi(u^*)\xi)=\mathcal{F}\xi-\rho(u)\eta=-\rho(u)\eta$ and  $(\mathcal{F}\pi(u)-\rho(u)\mathcal{F})\xi=\rho(u)\eta$, this finishes the proof of $2$. The proof of $3$ is similar.
\end{proof}

Since $P$ is the closed linear span of the reduced words and $A$, Lemma \ref{compact} implies that $\mathcal{F}\pi(x)-\pi(x)\mathcal{F}$ is a compact operator for all $x\in P$. Hence, the triple $(\pi,\rho,\mathcal{F})$ defines an element $\alpha\in{\rm KK}(P,\C)$. To prove Theorem \ref{Kamenability}, it suffices to show that $\lambda^*(\alpha)=[\epsilon]$ in ${\rm KK}(P_m,\C)$, where $P_m$ be the maximal $C^*$-algebra of the HNN extension i.e, the maximal HNN extension, and $\epsilon$ is the trivial representation of $P_m$.

Define $\widetilde{K}=K\oplus\C\Omega$, where $\Omega$ is a norm one vector, with the representation $\widetilde{\rho}=\rho\circ\lambda\oplus\epsilon$ of $P_m$. Define the unitary $\widetilde{\mathcal{F}}\,:\,H\rightarrow\widetilde{K}$ by
$$\widetilde{\mathcal{F}}\xi=\Omega\quad\text{and}\quad\widetilde{\mathcal{F}}|_{H_{-1}\oplus H_1}=F.$$
The triple $(\widetilde{\pi}\circ\lambda,\widetilde{\rho},\widetilde{\mathcal{F}})$, where $\widetilde{\pi}=\pi\circ\lambda$, defines an element $\gamma\in{\rm KK}(P_m,\C)$ satisfying $\gamma=\lambda^*(\alpha)-[\epsilon]$. It suffices to show that $(\widetilde{\pi},\widetilde{\rho},\widetilde{\mathcal{F}})$ is homotopic to a degenerated triple.

Define the unitary $v\in\mathcal{B}(\widetilde{K})$ by
$$v\eta=\Omega,\quad v\Omega=\eta,\quad v\rho(x)\eta=\rho(x)\eta\,\,\text{for}\,x\in P\,\text{with}\, E_B(x)=0.$$ 

\begin{lemma}\label{homotopy}
Write $P_m=\langle A,w\rangle$. The following holds.
\begin{enumerate}
\item $\widetilde{\mathcal{F}}\widetilde{\pi}(a)\widetilde{\mathcal{F}}^*=\widetilde{\rho}(a)$ for all $a\in A\subset P_m$.
\item $\widetilde{\mathcal{F}}\widetilde{\pi}(w)\widetilde{\mathcal{F}}^*=\widetilde{\rho}(w)v$.
\item $v\widetilde{\rho}(b)v^*=\widetilde{\rho}(b)$ for all $b\in B$.
\end{enumerate}
\end{lemma}

\begin{proof}
$1.$ Let $a\in A$. One has $\widetilde{\mathcal{F}}\pi(a)\widetilde{\mathcal{F}}^*\Omega=\widetilde{\mathcal{F}}\pi(a)\xi=\epsilon(a)\widetilde{\mathcal{F}}\xi=\epsilon(a)\Omega=\widetilde{\rho}(a)\Omega$. Since $\widetilde{\mathcal{F}}|_{H_{-1}\oplus H_1}=\mathcal{F}|_{H_{-1}\oplus H_1}$ we find, using assertion $1$ of Lemma \ref{compact}, that
$$\widetilde{\mathcal{F}}\pi(a)\widetilde{\mathcal{F}}^*|_K=\rho(a)|_K=\widetilde{\rho}(a)|_K.$$ This concludes the proof of $1$.

$2.$ Since $\widetilde{\pi}(w)=\pi(u)$, it suffices to prove the following.

\begin{itemize}
\item $\widetilde{\mathcal{F}}\pi(u)\widetilde{\mathcal{F}}^*\Omega=\widetilde{\rho}(w)v\Omega$.
\item $\widetilde{\mathcal{F}}\pi(u)\widetilde{\mathcal{F}}^*\eta=\widetilde{\rho}(w)v\eta$.
\item $\widetilde{\mathcal{F}}\pi(u)\widetilde{\mathcal{F}}^*\rho(x)\eta=\widetilde{\rho}(w)v\rho(x)\eta$ for all $x\in P$ such that $E_B(x)=0$.
\end{itemize}
Since $\widetilde{\rho}(w)v\Omega=\widetilde{\rho}(w)\eta=\rho(u)\eta$, the first point follows from the computation:
$$\widetilde{\mathcal{F}}\pi(u)\widetilde{\mathcal{F}}^*\Omega=\widetilde{\mathcal{F}}\pi(u)\xi=F\rho(u)\eta=\rho(u)\eta.$$
Since $w\in P_m$ is an irreducible unitary representation we get $\epsilon(w)=1$ and $\widetilde{\rho}(w)v\eta=\widetilde{\rho}(w)\Omega=\epsilon(w)\Omega=\Omega$. Hence, the second point follows from the computation:
$$\widetilde{\mathcal{F}}\pi(u)\widetilde{\mathcal{F}}^*\eta=\widetilde{\mathcal{F}}\pi(u)F^*\eta=\widetilde{\mathcal{F}}\pi(u)\pi(u^*)\xi=\widetilde{\mathcal{F}}\xi=\Omega.$$
For the last point we separate the different cases. First, observe that, for $x\in P$ with $E_B(x)=0$, one has $\widetilde{\rho}(w)v\rho(x)\eta=\widetilde{\rho}(w)\rho(x)\eta=\rho(u)\rho(x)\eta=\rho(ux)\eta$.

\textbf{Case 1:} \textit{If $x\in A$ and $E_B(x)=0$}. Then, since $uxu^*$ is reduced,
$$\widetilde{\mathcal{F}}\pi(u)\widetilde{\mathcal{F}}^*\rho(x)\eta=\widetilde{\mathcal{F}}\pi(u)F^*\rho(x)\eta
=\widetilde{\mathcal{F}}\pi(u)\pi(xu^*)\xi=F\pi(uxu^*)\xi=\rho(ux)\eta.$$
For the other case i.e. when $E_A(x)=0$, we can assume that $x=x_0u^{\epsilon_1}\ldots u^{\epsilon_n}x_n$ is reduced. We separate again in different cases.

\textbf{Case 2:} \textit{If $x=x_0u^{\epsilon_1}\ldots u^{\epsilon_n}x_n$ is reduced with $\epsilon_n=-1$}. One has 
$$\widetilde{\mathcal{F}}\pi(u)\widetilde{\mathcal{F}}^*\rho(x)\eta=\widetilde{\mathcal{F}}\pi(u)F^*\rho(x)\eta
=\widetilde{\mathcal{F}}\pi(uxu^*)\xi.$$
Since $\epsilon_n=-1$, $uxu^*$ can be written as a reduced word or the sum of two reduced words that end with $u^*$. Hence,
$\widetilde{\mathcal{F}}\pi(u)\widetilde{\mathcal{F}}^*\rho(x)\eta=F\pi(uxu^*)\xi=\rho(ux)\eta$.

\textbf{Case 3:} \textit{$\epsilon_n=1$}. If $x_n\in B$, since $\rho(b)\eta=\epsilon(b)\eta$ we may assume that $x_n=1$. Then,
$$\widetilde{\mathcal{F}}\pi(u)\widetilde{\mathcal{F}}^*\rho(x)\eta=\widetilde{\mathcal{F}}\pi(u)F^*\rho(x)\eta
=\widetilde{\mathcal{F}}\pi(ux)\xi.$$
Since $\epsilon_n=1$, $ux$ can be written as a reduced word or the sum of two reduced words that end with $u$. Hence,
$$\widetilde{\mathcal{F}}\pi(u)\widetilde{\mathcal{F}}^*\rho(x)\eta=F\pi(ux)\xi=\rho(ux)\eta.$$
If $E_B(x_n)=0$ then $\widetilde{\mathcal{F}}\pi(u)\widetilde{\mathcal{F}}^*\rho(x)\eta=\widetilde{\mathcal{F}}\pi(u)F^*\rho(x)\eta
=\widetilde{\mathcal{F}}\pi(uxu^*)\xi$. Since $\epsilon_n=1$ and $E_B(x_n)=0$, $uxu^*$ can be written as a reduced word or the sum of two reduced words that end with $u^*$. Hence,
$$\widetilde{\mathcal{F}}\pi(u)\widetilde{\mathcal{F}}^*\rho(x)\eta=F\pi(uxu^*)\xi=\rho(ux)\eta.$$

$3.$ Let $b\in B$. One has $v\widetilde{\rho}(b)v^*\Omega=v\rho(b)\eta=\epsilon(b)v\eta=\epsilon(b)\Omega=\widetilde{\rho}(b)\Omega$. Moreover,
$$v\widetilde{\rho}(b)v^*\eta=v\widetilde{\rho}(b)\Omega=\epsilon(b)v\Omega=\epsilon(b)\eta=\rho(b)\eta
=\widetilde{\rho}(b)\eta.$$
Eventually, for $x\in P$ with $E_B(x)=0$, one has $E_B(bx)=0$. Hence, $v\widetilde{\rho}(b)v^*\rho(x)\eta=v\rho(bx)\eta=\rho(b)\rho(x)\eta$.
\end{proof}

\textit{End of the proof of Theorem \ref{Kamenability}.} By Lemma \ref{homotopy}, $v\in\widetilde{\rho}(B)'\cap\mathcal{B}(\widetilde{K})$. Let $a\in\widetilde{\rho}(B)'\cap\mathcal{B}(\widetilde{K})$ be the unique self-adjoint element with spectrum $[-\pi,\pi]$ such that $v=e^{ia}$ and define, for $s\in\R$, $v_s=e^{isa}$. It follows that $v_s$ is a continuous one-parameter group of unitaries in $\widetilde{\rho}(B)'\cap\mathcal{B}(\widetilde{K})$. For $s\in\R$ define the unitary $w_s=\widetilde{\rho}(w)v_s\in\mathcal{B}(\widetilde{K})$. Observe that, for all $b\in B$ and all $s\in\R$,
$$w_s\widetilde{\rho}(b)w_s^*=\widetilde{\rho}(w)v_s\widetilde{\rho}(b)v_s^*\widetilde{\rho}(w^*)=\widetilde{\rho}(w)\widetilde{\rho}(b)\widetilde{\rho}(w^*)=\widetilde{\rho}(wxw^*)=\widetilde{\rho}(\theta(x)).$$
By the universal property of $P_m$, for each $s\in\R$, there exists a unique representation $\rho_s$ of $P_m$ on $\widetilde{K}$ such that
$$\rho_s(w)=w_s\quad\text{and}\quad\rho_s(a)=\widetilde{\rho}(a)\,\,\text{for}\,a\in A.$$
The family of triples $x_s=(\widetilde{\pi},\rho_s,\widetilde{F})$, for $s\in\R$, defines an homotopy between $x_0=(\widetilde{\pi},\widetilde{\rho},\widetilde{F})$ and $x_1$ which is degenerate by Lemma \ref{homotopy}.\end{proof}

\end{document}